\newtheorem{prethm}{{\bf Theorem}}
\newenvironment{thm}{\begin{prethm}{\hspace{-0.5
em}{\bf.}}}{\end{prethm}}
\newtheorem{prepro}[prethm]{{\bf Theorem}}
\newtheorem{preprop}[prethm]{{\bf Proposition}}
\newenvironment{prop}{\begin{preprop}{\hspace{-0.5
em}{\bf.}}}{\end{preprop}}
\newtheorem{precor}[prethm]{{\bf Corollary}}
\newenvironment{cor}{\begin{precor}{\hspace{-0.5
em}{\bf.}}}{\end{precor}}
\newtheorem{predefinition}[prethm]{{\bf Definition}}
\newenvironment{definition}{\begin{predefinition}{\hspace{-0.5
em}{\bf.}}}{\end{predefinition}}
\newtheorem{preconj}[prethm]{{\bf Conjecture}}
\newtheorem{preremark}[prethm]{{\bf Remark}}
\newenvironment{remark}{\begin{preremark}\rm{\hspace{-0.5
em}{\bf.}}}{\end{preremark}}
\newtheorem{preexample}[prethm]{{\bf Example}}
\newenvironment{example}{\begin{preexample}\rm{\hspace{-0.5
em}{\bf.}}}{\end{preexample}}
\newtheorem{prelem}[prethm]{{\bf Lemma}}
\newenvironment{lem}{\begin{prelem}{\hspace{-0.5
em}{\bf.}}}{\end{prelem}}
\newtheorem{prelam}{{\bf Lemma}}
\newtheorem{preproof}{{\bf Proof.}}
\newenvironment{proof}[1]{\begin{preproof}{\rm
#1}\hfill{$\Box$}}{\end{preproof}}
\title{\bf \large GORENSTEIN $\pi[T]$-PROJECTIVITY WITH RESPECT TO A TILTING MODULE
\thanks
{{\it Key Words}: Coherent, Dimension, Gorenstein, Tilting}
\thanks {2010{ \it Mathematics Subject Classification}: 13D07; 16D40; 18G25;}}
\author{{\normalsize M. Amini${}$  {}}\vspace{3mm}\\
{\footnotesize{${}^{\mathsf{}}$\it }}\\
{\footnotesize{${}^{\mathsf{}}$\it Department of Mathematics, Payame Noor University, Tehran, Iran.}}\\
{\footnotesize{}}\\
{\footnotesize{}}
{\footnotesize{$\mathsf{}$\quad\quad
$\mathsf{ mostafa.amini@pnu.ac.ir}$\quad\quad $\mathsf{}$}}}
\date{}
\begin{document}
\maketitle
{\small\noindent �{\bf{Abstract.}} Let $T$ be a tilting module. In this paper, Gorenstein $\pi[T]$-projective modules are introduced and some of their basic properties are studied. Moreover, some characterizations of rings over which all modules are Gorenstein $\pi[T]$-projective are given. For instance, on the $T$-cocoherent rings, it is proved that the Gorenstein $\pi[T]$-projectivity of all $R$-modules is equivalent to the $\pi[T]$-projectivity of $\sigma[T]$-injective as a module.

\vspace{5mm} \noindent{\bf\large 1. Basic Definitions and Notations}\\\\
Throughout this paper,
$R$ is an associative ring with non-zero identity, all modules are
unitary left $R$-modules. First we recall some known notions and facts needed in the sequel. Let $R$ be a ring and $T$ an R-module. Then 
\begin{enumerate}
\item[(1)] We denote by
${Prod}T$ (resp.
${F.Prod}T$), the class of modules isomorphic to direct
summands of direct product of copies (resp. finitely many copies) of $T$.

\item[(2)] We denote by
${Add}T$ (resp.
${F.Add}T$), the class of modules isomorphic to direct
summands of direct sum of copies (resp. finitely many copies) of $T$.

\item[(3)]  Following \cite{qv}, a module $T$ is called tilting ($1$-tilting) if it satisfies the following conditions:\\
(a) pd$(T)\leq 1$, where $pd(T)$ denotes the {\it projective dimension of $T$}.\\
(b) Ext$^{i}(T,T^{(\lambda)})=0$, for each $i>0$ and for every cardinal $\lambda$.\\
(c) There exists the exact sequence $0\rightarrow R\rightarrow
T_{0}\rightarrow T_{1}\rightarrow 0$, where $T_0,T_1\in { \rm Add}T$.

\item[(4)] By
$Copres^{n}T$ (resp. $F.Copres^{n}T$) and $Copres^{\infty}T$ (resp. $F.Copres^{\infty}T$), we denote the set of all modules $M$ such
that there exists exact sequences $$\begin{array}{cccccccccccc}
\vspace{-.10cm} 0 \longrightarrow M\longrightarrow
T_{0}\longrightarrow T_{1}\longrightarrow\cdots\longrightarrow T_{n-1}\longrightarrow T_{n}
\end{array}$$
and
$$\begin{array}{cccccccccccc} \vspace{-.10cm}
0 \longrightarrow M\longrightarrow
T_{0}\longrightarrow T_{1}\longrightarrow\cdots\longrightarrow T_{n-1}\longrightarrow T_{n}\longrightarrow\cdots
,\end{array}$$ respectively, where $T_{i}\in {\rm Prod}T$ (resp. $T_{i}\in{\rm F.Prod}T$), for
every $i\geq 0$. 

\item[(5)] A module $M$ is said to be \textit{cogenerated},
by $T$, denoted by $M\in {Cogen}T$,
(resp. \textit{generated}, denoted $M\in{Gen}T$) by $T$
if there exists an exact sequence $0\rightarrow M\rightarrow T^n$ (resp. $ T^{(n)}\rightarrow M\rightarrow 0 $ )
, for some positive integer $n$.

\item[(6)]  Let
$\mathcal{C}$ be a class of modules and $M$ be a module. A {\it right (resp. left) $\mathcal{C}$-resolution} of $M$ is a long
exact sequence $0\rightarrow M\rightarrow C_0\rightarrow C_1\rightarrow \cdots$ (resp. $ \cdots\rightarrow C_1\rightarrow C_0\rightarrow M\rightarrow 0$), where $C_i\in \mathcal{C}$, for all
$i\geq 0$. It is said that a module $M$ has right
{\it $\mathcal{C}$-dimension} $n$ (briefly, ${\mathcal{C}}.dim(M)=n$) if
$n$ is the least non-negative integer such that there exists a
long exact sequence
$$0\longrightarrow M\longrightarrow C_{0}\longrightarrow C_{1}\longrightarrow
\cdots\longrightarrow C_{n-1}\longrightarrow C_{n}\longrightarrow 
0$$ with $C_{i}\in \mathcal{C}$, for each $i\geq 0$. In particular, the ${\rm Prod}T$-dimension of $M$ is called {\it$T$-injetive dimension} of $M$ and is denoted by $T.i.dim(M)$. Note that for any tilting module $M$, if $M\in {\rm Cogen}T$, then \cite[Proposition 2.1]{shaveisicam} implies that ${\rm Cogen}T={\rm Copres}^\infty T$.
This shows that any module cogenerated by $T$ has an ${\rm Prod}T$-resolution.
The ${\rm Prod}T$-resolutions and the relative homological dimension were studied by Nikmehr and Shaveisi in \cite{shaveisicam}. 

\item[(7)] For any homomorphism $f$, we denote by ${ker}f$ and ${im}f$, the kernel and image of $f$, respectively. Let $A$ and $M\in {\rm
Cogen}T$ be two modules. We define the functor $$ {\mathcal{E}}_{T}^{n}(A,M):= \frac{{\rm ker}\delta_{*}^{n}}{{\rm
im} \delta_{*}^{n-1}},$$ where
$$\begin{array}{cccccccccc} \vspace{-.3cm}
&&&\delta_{0}&&\delta_{1}&&\delta_{n}\\
0& \longrightarrow&M&\longrightarrow&T_{0}& \longrightarrow &  \cdots
&\longrightarrow & T_{n} & \longrightarrow  \cdots
\end{array}$$

${\rm Prod}T$-resolution of $M$ and $\delta_{*} ^{n}={\rm Hom}(id_B,\delta_n)$,
for every
$i\geq 0$.
 see \cite{shaveisicam, amini} for more details.

\item[(8)]  Let $M\in {\rm Cogen}T$ and $N$ be two modules. A similar proof to that of \cite[Lemma 2.11]{seam1} shows that ${\mathcal{E}}^0_T(N,M)\cong {\rm Hom}(N,M)$. Moreover, ${\mathcal{E}}^1_T(-,M)=0$ implies that $M\in {\rm Prod}T$, and if $M\in {\rm Gen}T$, then  ${\mathcal{E}}^1_T(M,-)=0$ implies that $M\in {\rm Add}T$.
It is clear that ${\rm T.i.dim}(M)=n$ if and only if $n$ is the least non-negative integer such that
${\mathcal{E}}^{n+1}_{T}(A,M)=0$, for
any module $A$, see \cite[Remark 2.2]{shaveisicam} for more details. So, ${\rm T.i.dim}(M) = n$ if and only if ${\mathcal{E}}_{T}^{n+i}(A,M)=0$ for every module $A$ and every $i\geq 1$.
 A module with zero $T$-injective dimension (resp. $T$-projective dimension) is called
{\it $T$-injective {\rm(}resp. $T$-projective{\rm)}}.
A similar proof to that of \cite[Proposition 2.3]{seam1} shows
that the definition of ${\mathcal{E}}_{T}^{n}(C,M)$  is independent from the choice of
 ${\rm Prod}T$-resolutions.  For unexplained concepts and notations, we refer the reader to \cite{fullerbook,shaveisicam,rotman}.

\item[(9)] For module $T$, we denote by $\pi[T]$, the full subcategory of modules whose
objects are of the form $\frac{B}{A}\leq \frac{T^I}{A}$,
 for
some cardinal $I$ and some modules $A \leq B \leq T^I$.  Also,  the full subcategory $\sigma[T]$ of modules subgenerated by a given module $T$ (see \cite{W.R}). 
\item [(10)] 
$G$ is called  Gorenstein $\sigma[T]$-injective  if there exists an exact sequence of $\sigma[T]$-injective modules
$${\mathbf{A}}= \cdots\longrightarrow A_1\longrightarrow A_{0}\longrightarrow A^0\longrightarrow
A^1\longrightarrow\cdots$$ with $G={\rm ker}(A^0\rightarrow A^1)$ such that ${\rm Hom}(U,-)$ leaves this sequence exact whenever $U\in {\rm Pres}^{1}T$ with ${\rm T.p.dim}(U)<\infty$ (see \cite{amini}).
\item [(11)] $M$ is said to be {\it  finitely cogenerated} \cite{fullerbook}  if for every family $\{V_{k}\}_{J}$ of submodules of $M$ with ${\bigcap}_{J} V_{k}=0$, there is a
finite subset $I\subset J$ with ${\bigcap}_{I} V_{k}=0$.
\item [(12)] $M$ is said to be {\it finitely copresented}  if there is an exact sequence of
 $R$-modules $0\rightarrow M\rightarrow E^0 \rightarrow E^1$, where each $E^i$ is a
finitely cogenerated injective module, see \cite{ M.F, E.W, Z.M.C}. 
 \end{enumerate}
Let $T$ be a tilting module. In this paper, we introduce the {\it $\pi[T]$-projective} modules, {\it the $\pi[T]$-projective dimension} and {\it Gorenstein $\pi[T]$-projective} modules. 

Let $M\in {\rm Gen}T$. Then, $M$ is called $\pi[T]$-projective if the functor ${\mathcal{E}}_{T}^{1}(M,-)$ vanishes on $\pi[T]$.
Also,  the $\pi[T]$-projective dimension of $M$ is defined to be
$$\pi[T].pd(M)=\inf\{n:\ \mathcal{E}^{n+1}_T(M,N)=0\ {\rm for \ every} \ N\in\pi[T]\}.$$
We define a module $G$ to be  Gorenstein $\pi[T]$-projective ( $GT$-projective
for short), if there exists an exact sequence of $\pi[T]$-projective modules
$${\mathbf{B}}= \cdots\longrightarrow B_1\longrightarrow B_{0}\longrightarrow B^0\longrightarrow
B^1\longrightarrow\cdots$$ with $G={\rm ker}(B^0\rightarrow B^1)$ such that ${\rm Hom}(-,U)$ leaves this sequence exact whenever $U\in {\rm F.Copres}^{1}T$ with ${\rm T.i.dim}(U)<\infty$. In this paper, the $GT$-projective dimension of a module $G$ is denoted by $GT$-$pd(G)$.

In Section 2, we study some basic properties of the Gorenstein $\pi[T]$-projective modules. Recall that a ring $R$ is said to be {\it cocoherent} if every finitely cogenerated module is finitely copresented. So, $R$ is a cocoherent ring if and only if ${\rm Copres}^0R={\rm Copres}^1R$. For more information about the cocoherent rings, we refer the reader to \cite{Glaz 1989}. As a cogeneralization of this concept, we call a ring $R$ to be {\it $T$-cocoherent} if ${\rm F.Copres}^0T={\rm F.Copres}^1T$.

Section 3 is devoted to some characterizations of $T$-cocoherent
rings over which all modules are Gorenstein $\pi[T]$-projective. For instance, it is proved that every module is Gorenstein $\pi[T]$-projective if and if every $T$-injective module is $\pi[T]$-projective if and if every $\sigma[T]$-injective module is Gorenstein $\pi[T]$-projective. Finally, we give a sufficient condition under which every Gorenstein $\pi[T]$-projective module is $\pi[T]$-projective.

\section{Gorenstein $\pi[T]$-Projectivity}
We start with the following definition.
\begin{definition}\label{2.76}
Let $T$ be a tilting module. Then
\begin{enumerate}
\item [\rm (1)]
$M$ is called $\pi[T]$-projective if  ${\mathcal{E}}_{T}^{1}(M,N)=0$, for every $N\in\pi[T]$.
\item [\rm (2)]
  Let $G\in {\rm Gen}T$. Then, $G$ is called  Gorenstein $\pi[T]$-projective if there exists an exact sequence of $\pi[T]$-projective modules
$${\mathbf{B}}= \cdots\longrightarrow B_1\longrightarrow B_{0}\longrightarrow B^0\longrightarrow
B^1\longrightarrow\cdots$$ with $G={\rm ker}(B^0\rightarrow B^1)$ such that ${\rm Hom}(-,U)$ leaves this sequence exact whenever $U\in {\rm F.Copres}^{1}T$ with ${\rm T.i.dim}(U)<\infty$
\end{enumerate}
\end{definition}
\begin{remark}\label{2.g}
Let $T$ be a tilting module. Then
\begin{enumerate}
\item [\rm (1)]
 ${\mathcal{E}}_{T}^1(N,M)=0$ for any $\pi[T]$-projective module $N$ and any $M\in{\rm Copres}^0T$.
\item [\rm (2)]
If $A\in{\rm Add}T$, then $A$ is $\pi[T]$-projective.
\end{enumerate}
\end{remark}

\begin{lem}\label{2.56}
Let $ 0\rightarrow A\stackrel{\displaystyle f}\rightarrow B\stackrel{\displaystyle g}\rightarrow C\rightarrow 0$ be an exact sequence. Then
\begin{enumerate}
\item [\rm (1)]
If $A$ is $T$-injective and $A, B, C \in {\rm Cogen}T$, then $B=A\oplus C$.
\item [\rm (2)]
If $A\in{\rm F.Copres}^{n}T$ and $C\in{\rm F.Copres}^{n}T$, then $B\in{\rm F.Copres}^{n}T.$
\item [\rm (3)]
If $C\in{\rm F.Copres}^{n}T$ and $B\in{\rm F.Copres}^{n+1}T$, then $A\in{\rm F.Copres}^{n+1}T.$
\item [\rm (4)]
If $B\in{\rm F.Copres}^{n}T$ and $A\in{\rm F.Copres}^{n+1}T$, then $C\in{\rm F.Copres}^{n}T.$

\end{enumerate}
\end{lem}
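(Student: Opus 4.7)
For the four parts, I would handle (1) by a long exact sequence argument and then leverage it for (2)--(4); the latter amount to gluing resolutions at the first term and shifting. For (1), apply the functor $\mathcal{E}_T^{*}(-,A)$ to the short exact sequence. Since $A,B,C\in {\rm Cogen}T$, this functor behaves as a derived functor on this class, yielding
\[
\cdots\longrightarrow {\rm Hom}(B,A)\longrightarrow {\rm Hom}(A,A)\longrightarrow \mathcal{E}_T^{1}(C,A)\longrightarrow\cdots.
\]
By item (8), the $T$-injectivity of $A$ forces $\mathcal{E}_T^{1}(-,A)=0$, so in particular $\mathcal{E}_T^{1}(C,A)=0$ and $\mathrm{id}_A$ lifts to a retraction $r:B\to A$ of $f$. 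The sequence splits, giving $B\cong A\oplus C$.

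For (2), I would induct on $n$ via a dual horseshoe construction. Given ${\rm F.Copres}^{n}T$-resolutions of $A$ and $C$ with first terms $U_0,V_0\in {\rm F.Prod}T$, the $T$-injectivity of $U_0$ together with $A,B,C\in {\rm Cogen}T$ lets me extend $A\hookrightarrow U_0$ to a morphism $B\to U_0$; combined with $B\twoheadrightarrow C\hookrightarrow V_0$, this yields an embedding $B\hookrightarrow U_0\oplus V_0$. Passing to cokernels produces a short exact sequence of first cosyzygies, each in ${\rm F.Copres}^{n-1}T$, to which the inductive hypothesis applies and gives a cosyzygy of $B$ in ${\rm F.Copres}^{n-1}T$; splicing with $0\to B\to U_0\oplus V_0\to\cdots$ yields the desired ${\rm F.Copres}^{n}T$-resolution of $B$.

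Parts (3) and (4) both rest on (2). For (3), fix an ${\rm F.Copres}^{n+1}T$-resolution of $B$ with first term $W_0$; the quotient $W_0/A$ sits in a short exact sequence $0\to C\to W_0/A\to W_0/B\to 0$ whose outer terms lie in ${\rm F.Copres}^{n}T$ (the latter by truncation), so (2) gives $W_0/A\in {\rm F.Copres}^{n}T$, and splicing with $0\to A\to W_0\to W_0/A\to 0$ produces an ${\rm F.Copres}^{n+1}T$-resolution of $A$. For (4), form the pushout $P$ of $A\hookrightarrow B$ along the first map $A\to V_0$ of an ${\rm F.Copres}^{n+1}T$-resolution of $A$; the sequence $0\to B\to P\to V_0/A\to 0$ combined with (2) gives $P\in {\rm F.Copres}^{n}T$, and the dual sequence $0\to V_0\to P\to C\to 0$ splits by (1) since $V_0$ is $T$-injective, yielding $P\cong V_0\oplus C$.

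The main obstacle is the last step of (4): converting $P\cong V_0\oplus C$ into an explicit ${\rm F.Copres}^{n}T$-resolution of $C$, since ${\rm F.Copres}^{n}T$ is not obviously closed under direct summands. I expect to handle this by peeling off the $V_0$-summand at each cosyzygy level of an ${\rm F.Copres}^{n}T$-resolution of $P$, applying (1) repeatedly to split $V_0$ off each new ${\rm F.Prod}T$-term, with an ancillary check that the intermediate cosyzygies remain in ${\rm Cogen}T$ so (1) keeps applying.
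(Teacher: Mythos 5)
Your treatment of (1)--(3) is essentially the paper's: (1) is the same long-exact-sequence/splitting argument via $\mathcal{E}_T^1(C,A)=0$; (2) is the same dual horseshoe with induction on cosyzygies; and (3) is exactly the paper's construction ($W_0/A$ is the module the paper calls $D$, and the short exact sequence $0\to C\to W_0/A\to W_0/B\to 0$ is its bottom row). Part (4) is where you genuinely diverge. The paper compares the chosen ${\rm F.Prod}T$-coresolutions of $A$ and $B$ directly: it splits the middle column $0\to T_0'\to T_0\to D\to 0$ using (1), so that $D$ is a direct summand of $T_0$ and hence lies in ${\rm F.Prod}T$ (that class is closed under direct summands by definition), and then reads off $C\in{\rm F.Copres}^nT$ from $0\to C\to D\to L\to 0$ with $L\in{\rm F.Copres}^{n-1}T$. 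This realizes $C$ as a kernel into an ${\rm F.Prod}T$-module and so never meets the summand-closure question for ${\rm F.Copres}^nT$. Your pushout route instead lands $C$ as a direct summand of $P\cong V_0\oplus C$ with $P\in{\rm F.Copres}^nT$, and the obstacle you flag is real --- ${\rm F.Copres}^nT$ is not obviously summand-closed --- but it resolves more cheaply than your sketch suggests: you only need to peel $V_0$ off once, at the first term. Take $0\to P\to W_0\to W_1\to\cdots\to W_n$; since $V_0$ is $T$-injective, (1) splits $0\to V_0\to W_0\to W_0/V_0\to 0$, the complement $W_0''\cong W_0/V_0$ lies in ${\rm F.Prod}T$ as a direct summand of $W_0$, and $0\to C\to W_0''\to W_0/P\to 0$ splices with the tail $W_1\to\cdots\to W_n$ to give $C\in{\rm F.Copres}^nT$; no level-by-level peeling is needed. (The ${\rm Cogen}T$ hypotheses required to invoke (1) at these points are handled at the same level of care as in the paper's own argument.) In short: correct, same method for (1)--(3), and a workable but slightly more roundabout alternative for (4), whose one acknowledged gap closes with the single observation that ${\rm F.Prod}T$ is summand-closed.
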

\begin{proof}
(1) 
If $A$ is $T$-injective and $A, B, C \in {\rm Cogen}T$, then we deduce that the
sequence
$$ 0\longrightarrow {\rm Hom}(C,A)\stackrel{\displaystyle g^{*}}\longrightarrow {\rm Hom}(B,A)\stackrel{\displaystyle f^{*}}\longrightarrow
{\rm Hom}(A,A)\longrightarrow {\mathcal{E}}_{T}^{1}(C,A)=0$$
is exact. So, 
there exists $h: B\rightarrow A$ such that $hf=1_{A}.$

(2)
We prove the assertion by induction on $n$. If $n=0$, then the commutative
diagram with exact rows

$$
 \begin{array}{ccccccccccccccccc}
& \hspace{-3.7cm}0&\hspace{-3.7cm}0 &\hspace{-2.7cm}0& \\
 \hspace{1cm} \downarrow  & \hspace{0.7cm}\downarrow &\hspace{-0.2cm}\downarrow & \\
  o\longrightarrow A& \stackrel{\displaystyle f}\longrightarrow B&\stackrel{\displaystyle g}\longrightarrow C\longrightarrow 0 \\
      \hspace{1.5cm} \downarrow h_{0}^{'}&\ \ \ \ \ \ \ \ \ \ \downarrow h_{0}&\ \  \downarrow  h_{0}^{''} &  \\
    0\longrightarrow   T^{'}_0 &{\hspace{-.6cm}\stackrel{\displaystyle \i_0} \longrightarrow} \ \ \ \ \  T^{'}_0\oplus T^{''}_0&{\hspace{-.3cm}\stackrel{\displaystyle \pi_0}\longrightarrow} \ \ T^{''}_0\longrightarrow 0\\
 \hspace{1cm}  \downarrow  & \ \ \ \ \ \downarrow & {\hspace{-4mm}\downarrow} &  \\

     \end{array}
$$

\noindent exists, where $T'_0,T''_0\in {\rm F.Prod}T$, $i_0$ is the inclusion map, $\pi_0$ is a canonical epimorphism and $h_0=i_{0}h_0'$ is endomorphism, by Five Lemma. Let $K_1'={\rm coker} (h_0')$, $K_1={\rm coker}(h_0)$ and $K_1''={\rm coker}( h_0'')$. It is clear that $(T^{'}_0\oplus T^{''})\in{\rm F.Prod}T$ and $K_1',K_1''\in{\rm F.Copres}^{n-1}T$; so, the induction implies that $K_1\in {\rm F.Copres}^{n-1}T$. Hence $B\in {\rm F.Copres}^{n}T$.

(3) Let $B\in{\rm F.Pres}^{n+1}T$ and $C\in{\rm F.Pres}^nT$,  then the following commutative diagram with exact rows:
\begin{center}
$
 \begin{array}{ccccccccc}
&{\hspace{-7cm}0}&{\hspace{-5.8cm}0}&  \\
    &{\hspace{-7cm}\downarrow} &{\hspace{-5.8cm}\downarrow}&  \\
  \hspace{-3cm}0\longrightarrow A&\hspace{-5.7cm}= {\hspace{-1.8mm}=}  A&&&  \\
    \hspace{-2cm}\downarrow & \hspace{-5cm}\downarrow &  \\
    0\longrightarrow B \longrightarrow T_{0}\longrightarrow L\longrightarrow 0 \\
    \hspace{-2cm}\downarrow &  &\hspace{-5.5cm}\downarrow &\hspace{-4.3cm}\parallel  &  \\
      0\longrightarrow C\longrightarrow D\longrightarrow L\longrightarrow 0& \\
 &&\hspace{-7.8cm}\downarrow &\hspace{-6.3cm}\downarrow&  \\
  & &  \hspace{-7.8cm}  0&\hspace{-6.3cm}  0
     \end{array}
$
\end{center}
\noindent where $T_{0} \in {\rm F.Prod}T$ and $L\in {\rm F.Copres}^{n}T$. By (2), $D\in {\rm F.Copres}^{n}T$. So, we deduce that $A\in {\rm F.Copres}^{n+1}T$.

(4)  Let $A\in{\rm F.Pres}^{n+1}T$ and $B\in{\rm F.Pres}^nT$,  then the following commutative diagram with exact rows:
\begin{center}
$
 \begin{array}{ccccccccc}
&{\hspace{-7cm}0}&{\hspace{-6cm}0}&  \\
    &{\hspace{-7cm}\downarrow} &{\hspace{-6cm}\downarrow}&  \\
  0\longrightarrow A\longrightarrow T_{0}^{'}\longrightarrow L^{'}\longrightarrow 0   \\
    \hspace{-2cm}\downarrow & \hspace{-5cm}\downarrow &  \\
    0\longrightarrow B \longrightarrow T_{0}\longrightarrow L\longrightarrow 0 \\
    \hspace{-2cm}\downarrow &  &\hspace{-5.5cm}\downarrow &\hspace{-4.3cm}\parallel    &  \\
      0\longrightarrow C\longrightarrow D\longrightarrow L\longrightarrow 0& \\
 &&\hspace{-7.8cm}\downarrow &\hspace{-6.3cm}\downarrow&  \\
  & &  \hspace{-7.8cm}  0&\hspace{-6.3cm}  0
     \end{array}
$
\end{center}
\noindent where $T_{0}, T_{0}^{'}  \in {\rm F.Prod}T$ and $L\in {\rm F.Copres}^{n-1}T$. Since $T_{0}^{'}$ is $T$-injective, we have that $T_{0}=T_{0}^{'}\oplus D$ By (1), and $D\in{\rm Cogen}T$. Thus for any $N\in {\rm Cogen}T$, we have
$$ {\mathcal{E}}_{T}^{1}(T_{0},N)={\mathcal{E}}_{T}^{1}(T_{0}^{'}\oplus D,N)={\mathcal{E}}_{T}^{1}(T_{0}^{'},N)\oplus {\mathcal{E}}_{T}^{1}( D,N)=0.$$ 
Hence $D\in {\rm F.Prod}T$. On the other hand, $L\in {\rm F.Copres}^{n-1}T$. Therefore, we conclude that $C\in {\rm F.Copres}^{n}T.$
\end{proof}

In the following theorem, we show that in the case of $T$-cocoherent rings, the existence
of $\pi[T ]$-projective complex of a module is sufficient to be Gorenstein $\pi[T ]$-projective.
\begin{thm}\label{2.30}
Let $R$ be a $T$-cocoherent ring and $G\in {\rm Gen}T$ be a module. Then $G$ is
Gorenstein $\pi[T]$-projective if and only if there is an exact sequence
$${\mathbf{B}}= \cdots\longrightarrow B_1\longrightarrow B_{0}\longrightarrow B^0\longrightarrow
B^1\longrightarrow\cdots$$
of $\pi[T]$-projective modules such that $G=\ker(B^0\rightarrow B^1)$.
\end{thm}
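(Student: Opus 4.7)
The forward implication is immediate from Definition \ref{2.76}. For the converse, given an exact complex $\mathbf{B}$ of $\pi[T]$-projective modules with $G={\rm ker}(B^0\to B^1)$, the task is to verify that ${\rm Hom}(-,U)$ leaves $\mathbf{B}$ exact whenever $U\in{\rm F.Copres}^1 T$ satisfies ${\rm T.i.dim}(U)<\infty$. The plan is to proceed by induction on $n={\rm T.i.dim}(U)$, using the $T$-cocoherent hypothesis to produce at each step a short exact sequence that reduces $n$ while keeping the second argument inside ${\rm F.Copres}^1 T$.

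For the base case $n=0$, item (8) of Section 1 gives $\mathcal{E}_T^1(A,U)=0$ for every module $A$. Splitting $\mathbf{B}$ into its component short exact sequences $0\to Z_{i+1}\to B_i\to Z_i\to 0$ at each cocycle $Z_i$ and applying ${\rm Hom}(-,U)$ yields short exact sequences, which splice together to deliver exactness of ${\rm Hom}(\mathbf{B},U)$.

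For the inductive step $n\geq 1$, since $U\in{\rm F.Copres}^1 T$ there is an exact sequence $0\to U\to T'_0\to T'_1$ with $T'_0,T'_1\in{\rm F.Prod}T$; setting $U'=T'_0/U$, the cokernel embeds into $T'_1$ and so lies in ${\rm F.Copres}^0 T$. The $T$-cocoherence ${\rm F.Copres}^0 T={\rm F.Copres}^1 T$ then returns $U'$ to the inductive class, and because $T'_0$ is $T$-injective, standard dimension-shifting along $0\to U\to T'_0\to U'\to 0$ gives ${\rm T.i.dim}(U')=n-1$. The inductive hypothesis therefore yields exactness of ${\rm Hom}(\mathbf{B},U')$, while the base case applied to $T'_0$ yields exactness of ${\rm Hom}(\mathbf{B},T'_0)$. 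Moreover $U$ sits inside $T'_0$, which is a summand of a finite power of $T$, so $U\in\pi[T]$; hence $\mathcal{E}_T^1(B_i,U)=0$ for every $i$ by $\pi[T]$-projectivity of $B_i$, and this ensures that $0\to{\rm Hom}(\mathbf{B},U)\to{\rm Hom}(\mathbf{B},T'_0)\to{\rm Hom}(\mathbf{B},U')\to 0$ is a genuine short exact sequence of complexes. The associated long exact sequence in homology, sandwiched between two exact complexes, forces ${\rm Hom}(\mathbf{B},U)$ to be exact as well.

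The central obstacle is guaranteeing that the cokernel $U'$ stays inside ${\rm F.Copres}^1 T$ at every step of the induction; this is precisely what the $T$-cocoherent hypothesis furnishes, and it is the only place where that hypothesis enters the argument. A secondary, lighter point is the observation that $U\in\pi[T]$, which is what unlocks the vanishing $\mathcal{E}_T^1(B_i,U)=0$ needed to assemble the short exact sequence of Hom-complexes from which the homological squeeze follows.
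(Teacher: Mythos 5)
Your proof is correct and follows essentially the same route as the paper's: induction on ${\rm T.i.dim}(U)$, using $T$-cocoherence to keep the cokernel of $U\hookrightarrow T_0'$ inside ${\rm F.Copres}^1T$, forming the short exact sequence of complexes $0\to{\rm Hom}(\mathbf{B},U)\to{\rm Hom}(\mathbf{B},T_0')\to{\rm Hom}(\mathbf{B},U')\to 0$, and concluding by the long exact homology sequence. You are in fact more explicit than the paper on two points it leaves implicit, namely the base case $m=0$ and the observation that $U\in\pi[T]$ (which the paper delegates to Remark~\ref{2.g}) needed to make the rows of the Hom-diagram exact.
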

\begin{proof}
{($\Longrightarrow$) : This is a direct consequence of definition.

($\Longleftarrow$) : By definition, it suffices to show that ${\rm Hom}({\mathbf{B}},U)$ is exact for
every module $U\in {\rm F.Copres}^{1}T$  with ${\rm T.i.dim} (U) = m < \infty$. To prove this, we use the induction on $m$. The case $m =0$ is clear. Assume that $m \geq 1$. Since $U\in {\rm F.Copres}^{1}T$, there exists an exact sequence
$0\rightarrow U\rightarrow T_{0}\rightarrow I \rightarrow 0$ with $T_{0}\in {\rm F.Prod}T\subseteq{\rm F.Copres} ^{0}T$. Now, from the $T$-cocoherence of $R$ and Lemma \ref{2.56}, we deduce that $I, T_{0}\in {\rm F.Copres} ^{1}T$. Also, ${\rm T.i.dim}(I) \leq m-1$ and ${\rm T.i.dim} (T_0) =0$. Thus by Remark \ref{2.g}, the following short exact sequence of complexes exists:
\begin{center}
$
\begin{array}{ccccccccc}
& \vdots&\vdots &\vdots&\\
& \downarrow & \downarrow &\downarrow & \\
0 \longrightarrow &{\rm Hom}(B^1,U) &\longrightarrow {\rm Hom}(B^1,T_{0})&\longrightarrow {\rm Hom}(B^1,I)\longrightarrow 0 \\
& \downarrow & \downarrow &\downarrow & \\
0 \longrightarrow &{\rm Hom}(B^0,U) & \longrightarrow {\rm Hom}(B^0,T_0)&\longrightarrow {\rm Hom}(B^0,I)\longrightarrow 0 \\
& \downarrow &\downarrow &\downarrow & \\
0 \longrightarrow &{\rm Hom}(B_0,U) & \longrightarrow {\rm Hom}(B_0,T_0)&\longrightarrow {\rm Hom}(B_0,I)\longrightarrow 0 \\
& \downarrow &\downarrow &\downarrow & \\
0 \longrightarrow &{\rm Hom}(B_1,U) & \longrightarrow {\rm Hom}(B_1,T_0)&\longrightarrow {\rm Hom}(B_1,I)\longrightarrow 0 \\
& \downarrow &\downarrow &\downarrow & \\
& \vdots&\vdots &\vdots&\\
& \parallel & \parallel &\parallel& \\
0 \longrightarrow &{\rm Hom}({\mathbf{B}},U) & \longrightarrow {\rm Hom}({\mathbf{B}},T_0)&\longrightarrow {\rm Hom}({\mathbf{B}},I)\longrightarrow 0. \\

\end{array}
$
\end{center}

\noindent By induction, ${\rm Hom}({\mathbf{B}},T_{0})$ and ${\rm Hom}({\mathbf{B}},I)$ are exact, hence ${\rm Hom}({\mathbf{B}},U)$ is exact by \cite[Theorem 6.10]{rotman}. Therefore, $G$ is Gorenstein $\pi[T]$-projective.}
\end{proof}

It is worthy to mention that the notion of $T$-injectivity ($T$-projectivity) is different from the notion of an $M$-injective ($M$-projective) module in \cite{fullerbook}.
\begin{cor}\label{2.3}
Let $R$ be a $T$-cocoherent ring and $G\in {\rm Gen}T$ be a module. Then the
following assertions are equivalent:
\begin{enumerate}
\item [\rm (1)]
$G$ is Gorenstein $\pi[T]$-projective;
\item [\rm (2)]
There is an exact sequence $0\rightarrow G \rightarrow B^{0}\rightarrow B^{1}\rightarrow\cdots $ of modules, where every $B^i$ is $\pi[T]$-projective;
\item [\rm (3)]
There is a short exact sequence $0\rightarrow G\rightarrow M\rightarrow I\rightarrow 0$ of modules, where
$M$ is $\pi[T]$-projective and $I $ is Gorenstein $\pi[T]$-projective.
\end{enumerate}
\end{cor}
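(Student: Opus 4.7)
The plan is to prove the cyclic implications $(1)\Rightarrow(2)\Rightarrow(3)\Rightarrow(1)$, relying throughout on Theorem \ref{2.30}, which over a $T$-cocoherent ring identifies Gorenstein $\pi[T]$-projectivity with the mere existence of a doubly-infinite exact sequence of $\pi[T]$-projective modules (no Hom-exactness condition needed).

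The implication $(1)\Rightarrow(2)$ is immediate: Theorem \ref{2.30} yields a doubly-infinite $\pi[T]$-projective exact sequence $\cdots\to B_1\to B_0\to B^0\to B^1\to\cdots$ with $G={\rm ker}(B^0\to B^1)$, whose right half is precisely the coresolution required in (2). For $(2)\Rightarrow(3)$, I would set $M:=B^0$ and $I:={\rm ker}(B^1\to B^2)={\rm im}(B^0\to B^1)$, producing the short exact sequence $0\to G\to M\to I\to 0$ with $M$ $\pi[T]$-projective and $I\in{\rm Gen}T$ (as a quotient of $B^0\in{\rm Gen}T$). The shifted tail $0\to I\to B^1\to B^2\to\cdots$ is a $\pi[T]$-projective right coresolution of $I$; to conclude that $I$ is Gorenstein $\pi[T]$-projective, I would complete it to a doubly-infinite exact complex by constructing a left $\pi[T]$-projective resolution of $I$ via iterated surjections $T^{(\lambda_n)}\twoheadrightarrow K_n$ from modules in ${\rm Add}T$ (which are $\pi[T]$-projective by Remark \ref{2.g}(2)), then invoke Theorem \ref{2.30}.

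For $(3)\Rightarrow(1)$, I would apply Theorem \ref{2.30} to $I$ to obtain a doubly-infinite $\pi[T]$-projective exact sequence $\cdots\to B_1^I\to B_0^I\to B^{I,0}\to B^{I,1}\to\cdots$ with $I={\rm ker}(B^{I,0}\to B^{I,1})$, then splice with $0\to G\to M\to I\to 0$ to assemble a doubly-infinite complex for $G$. The right half is $0\to G\to M\to B^{I,0}\to B^{I,1}\to\cdots$, where $M\to B^{I,0}$ is the composite $M\twoheadrightarrow I\hookrightarrow B^{I,0}$, giving $G={\rm ker}(M\to B^{I,0})$. The left half arises by lifting the left part of $I$'s resolution through $M\twoheadrightarrow I$; since each $B_i^I$ is $\pi[T]$-projective, Remark \ref{2.g}(1) supplies the needed vanishing $\mathcal{E}_T^1(B_i^I,G)=0$ whenever $G\in{\rm Cogen}T$, so that the required lifts exist and fit into a single doubly-infinite exact complex of $\pi[T]$-projectives. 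A final appeal to Theorem \ref{2.30} then gives that $G$ is Gorenstein $\pi[T]$-projective.

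The main obstacle is the lifting in $(3)\Rightarrow(1)$ together with the analogous iteration in $(2)\Rightarrow(3)$: both reduce to vanishing of $\mathcal{E}_T^1(-,-)$ between $\pi[T]$-projectives and the relevant syzygies (or $G$ itself), which by Remark \ref{2.g}(1) is guaranteed only for modules in ${\rm Cogen}T$. Establishing that $G$ and the successive syzygies from the ${\rm Add}T$-covers both lie in ${\rm Cogen}T$ (so that $\mathcal{E}_T^1$-vanishing applies) and remain in ${\rm Gen}T$ (so that the iteration can continue) is the crucial technical point; I expect to exploit the $T$-cocoherence of $R$ together with the transfer properties of Lemma \ref{2.56} to track membership in ${\rm F.Copres}^nT$ along the relevant short exact sequences.
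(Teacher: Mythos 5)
Your overall skeleton (cycle $(1)\Rightarrow(2)\Rightarrow(3)\Rightarrow(1)$, with Theorem \ref{2.30} as the engine) is close to the paper's, and your $(1)\Rightarrow(2)$ and $(2)\Rightarrow(3)$ steps are essentially sound: the left resolution you need for $I$ comes directly from ${\rm Gen}T={\rm Pres}^{\infty}T$ (\cite[Proposition 2.1]{shaveisicam}, since $I$ is a quotient of $B^0\in{\rm Gen}T$), with each term in ${\rm Add}T$ hence $\pi[T]$-projective by Remark \ref{2.g}(2); no appeal to $T$-cocoherence or Lemma \ref{2.56} is needed to keep the iteration going, so the ``crucial technical point'' you flag is already covered by that citation.

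The genuine gap is in $(3)\Rightarrow(1)$. The complex you propose, $\cdots\to B_1^I\to B_0^I\stackrel{f}\to M\to B^{I,0}\to\cdots$ with $f$ a lift of the surjection $g\colon B_0^I\twoheadrightarrow I$ through $\pi\colon M\twoheadrightarrow I$, is not exact: at $M$ one has $\ker(M\to B^{I,0})=\ker\pi=G$, while $\pi(\mathrm{im}\,f)=I$, so $\mathrm{im}\,f\not\subseteq G$ unless $I=0$; similarly at $B_0^I$ one gets $\ker f\subsetneq f^{-1}(G)=\ker g=\mathrm{im}(B_1^I\to B_0^I)$ in general. No choice of lift repairs this, so the ``lifting through $M\twoheadrightarrow I$'' construction cannot yield the required doubly-infinite exact complex. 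The correct (and simpler) route, which is what the paper does, is to reduce $(3)$ to $(2)$: splice $0\to G\to M\to I\to 0$ with a right $\pi[T]$-projective coresolution $0\to I\to C^0\to C^1\to\cdots$ of $I$ to obtain $0\to G\to M\to C^0\to C^1\to\cdots$, and then obtain the left half of the complex for $G$ from a left ${\rm Add}T$-resolution of $G$ itself (again via ${\rm Gen}T={\rm Pres}^{\infty}T$), not from $I$'s resolution; Theorem \ref{2.30} then finishes.
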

\begin{proof}
{
$(1)\Longrightarrow (2)$ and $(1)\Longrightarrow (3)$ follow from definition.

$(2)\Longrightarrow (1)$ For  module $G\in {\rm Gen}T$, \cite[Proposition 2.1]{shaveisicam} implies that ${\rm Gen}T={\rm Pres}^\infty T$. So, there is an exact sequence
$$\cdots\longrightarrow T_{1}\longrightarrow T_{0}\longrightarrow G\longrightarrow 0 $$
where any $T_{i}$, $\pi[T]$-projective by Remark \ref{2.g}. Thus, the exact sequence
$$ \cdots\longrightarrow T_1\longrightarrow T_{0}\longrightarrow B^0\longrightarrow
B^1\longrightarrow\cdots$$
of $\pi[T]$-projective modules exists, where $G={\rm ker}(B^0\rightarrow B^1)$. Therefore, $G$ is Gorenstein $\pi[T]$-projective, by Theorem \ref{2.30}.

$(3)\Longrightarrow (2)$ Assume that the exact sequence
$$ 0\longrightarrow G\longrightarrow M \longrightarrow I\longrightarrow 0 \ \ (1)$$ exists, where
$M$ is $\pi[T]$-projective and $I $ is Gorenstein $\pi[T]$-projective. Since $I$ is Gorenstein $\pi[T]$-projective, there is an exact sequence
$$0\rightarrow I\rightarrow C^{0}\rightarrow C^{1}\rightarrow\cdots \ \ (2)$$
where every $C^{i}$ is $\pi[T]$-projective.
Assembling the sequences $(1)$ and $(2)$, we
get the exact sequence
$$0\rightarrow G\rightarrow M\rightarrow C^{0}\rightarrow C^{1}\rightarrow\cdots,$$ where $M$ and every $C^{i}$
is $\pi[T]$-projective, as desired.}
\end{proof}

\begin{prop}\label{2.5}
For any module $G\in{\rm Gen}T$, the following statements hold.
\begin{enumerate}
\item [\rm (1)]
If $G$ is Gorenstein $\pi[T]$-projective, then ${\mathcal{E}}_{T}^{i}(G, U)=0$
for all $i>0$ and every module $U\in {\rm F.Copres}^{1}T$ with ${\rm T.i.dim}(U) < \infty$.
\item [\rm (2)]
If $0\rightarrow N\rightarrow  G_{n-1}\rightarrow\cdots \rightarrow G_{0}\rightarrow G\rightarrow 0$ is an exact sequence of
modules where every $G_i$ is a Gorenstein $\pi[T]$-projective and $G_{i}\in{\rm Gen}T$, then ${\mathcal{E}}_{T}^{i}(N, U)={\mathcal{E}}_{T}^{n+i}(G, U)$ for any $i>0$ and any module $U\in {\rm F.Copres}^{1}T$ with ${\rm T.i.dim} (U) < \infty$.
\end{enumerate}
\end{prop}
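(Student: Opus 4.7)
The plan is to prove (1) by dimension shifting along the right half of the complex $\mathbf{B}$ that witnesses $G$ as Gorenstein $\pi[T]$-projective; part (2) then falls out by a standard iterated dimension shift using (1). The engine for both steps is a preliminary vanishing: for any $\pi[T]$-projective module $P$ and any $U\in{\rm Cogen}T$ with ${\rm T.i.dim}(U)=m<\infty$, one has ${\mathcal{E}}_T^i(P,U)=0$ for every $i\geq 1$.

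To prove the preliminary vanishing, I would pick a ${\rm Prod}T$-resolution of $U$ of length $m$, say $0\to U\to E^0\to\cdots\to E^m\to 0$, and decompose it into short exact sequences $0\to U^j\to E^j\to U^{j+1}\to 0$ with $U^0=U$ and $U^{m+1}=0$. Since each $U^j$ sits inside $E^j\in{\rm Prod}T$, and every submodule of some $T^I$ lies in $\pi[T]$ (take $A=0$ in item~(9)), each $U^j$ belongs to $\pi[T]$. Applying the long exact sequence of ${\mathcal{E}}_T^{\bullet}(P,-)$ to these short exact sequences, combined with ${\mathcal{E}}_T^i(P,E^j)=0$ for $i\geq 1$ (because each $E^j$ is $T$-injective, by item~(8)) and ${\mathcal{E}}_T^1(P,U^j)=0$ (because $P$ is $\pi[T]$-projective and $U^j\in\pi[T]$), produces dimension shifts ${\mathcal{E}}_T^i(P,U^j)\cong{\mathcal{E}}_T^{i-1}(P,U^{j+1})$ for $i\geq 2$. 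Iterating collapses ${\mathcal{E}}_T^i(P,U)$ to ${\mathcal{E}}_T^1(P,U^{i-1})=0$ for every $i\geq 1$.

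For (1), set $K^0=G$ and $K^j=\ker(B^j\to B^{j+1})$ for $j\geq 1$, so that the right half of $\mathbf{B}$ splits into short exact sequences $0\to K^j\to B^j\to K^{j+1}\to 0$. The preliminary vanishing applied with $P=B^j$ yields ${\mathcal{E}}_T^i(B^j,U)=0$ for every $i\geq 1$, and the long exact sequence of ${\mathcal{E}}_T^{\bullet}(-,U)$ then gives isomorphisms ${\mathcal{E}}_T^i(K^j,U)\cong{\mathcal{E}}_T^{i+1}(K^{j+1},U)$ for $i\geq 1$ and every $j\geq 0$. Iterating yields ${\mathcal{E}}_T^i(G,U)\cong{\mathcal{E}}_T^{i+k}(K^k,U)$ for every $k\geq 0$; choosing $k$ with $i+k>{\rm T.i.dim}(U)$ forces the right-hand side to vanish by the characterization of $T$-injective dimension in item~(8), so ${\mathcal{E}}_T^i(G,U)=0$. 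For (2), I would decompose the given exact sequence into short exact sequences $0\to K_{j+1}\to G_j\to K_j\to 0$ with $K_0=G$ and $K_n=N$; since each $G_j\in{\rm Gen}T$ is Gorenstein $\pi[T]$-projective, part (1) gives ${\mathcal{E}}_T^i(G_j,U)=0$ for $i\geq 1$, and the long exact sequence yields ${\mathcal{E}}_T^i(K_{j+1},U)\cong{\mathcal{E}}_T^{i+1}(K_j,U)$. Composing these for $j=0,\dots,n-1$ gives the desired identification ${\mathcal{E}}_T^i(N,U)\cong{\mathcal{E}}_T^{n+i}(G,U)$.

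The main obstacle is the preliminary vanishing for $\pi[T]$-projective modules. A priori $\pi[T]$-projectivity is only a first-order vanishing condition tested against objects of the specific class $\pi[T]$, so lifting it to higher ${\mathcal{E}}_T^i(P,U)$ requires the observation that along any ${\rm Prod}T$-resolution of $U$ every intermediate kernel remains inside $\pi[T]$; this is precisely why $\pi[T]$, rather than only ${\rm Cogen}T$, is the correct class to test projectivity against. All dimension shifts rely on the long exact sequence properties of ${\mathcal{E}}_T^{\bullet}$ in both arguments, as set up in \cite{shaveisicam} and \cite{amini}.
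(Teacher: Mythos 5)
Your argument is correct and follows essentially the same route as the paper: both parts are proved by dimension shifting along the $\pi[T]$-projective coresolution of $G$ (resp.\ the given resolution in (2)) against the fact that ${\mathcal{E}}_T^{m+i}(-,U)=0$ once ${\rm T.i.dim}(U)=m$. The only difference is that you explicitly establish the higher vanishing ${\mathcal{E}}_T^{i}(P,U)=0$ for $i\geq 2$ and $P$ a $\pi[T]$-projective module, by noting that the cosyzygies of a ${\rm Prod}T$-resolution of $U$ remain in $\pi[T]$ --- a step the paper merely asserts by citing Remark~\ref{2.g}, which literally covers only $i=1$, so your version is in fact the more complete one.
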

\begin{proof}
{
(1) Let $G$ be a Gorenstein $\pi[T]$-projective module, and ${\rm T.i.dim}(U)=m<\infty$.
Then by hypothesis, the following $\pi[T]$-projective
resolution of $G$ exists:
$$0\rightarrow G\rightarrow B^{0}\rightarrow \cdots \rightarrow B^{m-1}\rightarrow N\rightarrow 0.$$
By Remark \ref{2.g}, ${\mathcal{E}}_{T}^{i}(B_j, U)= 0$ for every $i > 0$ and every $0 \leq j \leq m-1$. Since ${\rm T.i.dim} (U)=m$, we deduce that
 ${\mathcal{E}}_{T}^{i}(G,U)\cong{\mathcal{E}}_{T}^{m+i}(N, U)=0$.

(2) Setting $G_n=N$ and $K_j=\ker (G_{j}\rightarrow G_{j-1})$, for every $0\leq j\leq n$, the short exact sequence
$0\rightarrow K_j\rightarrow G_{j}\rightarrow K_{j-1}\rightarrow 0$ exist. Thus by (1), the induced exact sequences
$$0={\mathcal{E}}_T^r(G_{j},U)\rightarrow{\mathcal{E}}_T^r(K_{j},U)\rightarrow{\mathcal{E}}_T^{r+1}(K_{j-1},U)\rightarrow
{\mathcal{E}}_T^{r+1}(G_{j},U)=0$$
exists and so ${\mathcal{E}}_T^r(K_{j},U)\cong{\mathcal{E}}_T^{r+1}(K_{j-1},U)$, for every $r\geq 0$. Since $K_{n-1}=N$, we have
$${\mathcal{E}}_T^{n+i}(G,U)\cong{\mathcal{E}}_T^{n+i-1}(K_{0},U)\cong\cdots\cong {\mathcal{E}}_T^{i}(N,U),$$
as desired.}
\end{proof}
Next, we study the Gorenstein $\pi[T]$-projectivity of modules on $T$-cocoherent rings, in short exact sequences.
\begin{prop}\label{2.6}
Let $R$ be $T$-cocoherent and consider the exact sequence $0\rightarrow N\rightarrow B\rightarrow G\rightarrow 0$, where $B$ is $\pi[T]$-projective. Then ${\rm GT}$-${\rm pd}(G)\leq{\rm GT}$-${\rm pd}(N)+1$. In particular, if $G$ is Gorenstein $\pi[T]$-projective, so is $N$.
\end{prop}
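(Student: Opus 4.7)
The plan is to address both assertions by splicing the given short exact sequence $0\to N\to B\to G\to 0$ against a resolution whose existence is guaranteed by Corollary~\ref{2.3}. As a preliminary, every $\pi[T]$-projective module $P$ is itself Gorenstein $\pi[T]$-projective: the trivial complex $\cdots\to 0\to P\to P\to 0\to\cdots$, with identity in the middle and $P=\ker(P\to 0)$, is exact, all of its terms are $\pi[T]$-projective, and it remains exact after applying ${\rm Hom}(-,U)$ for every module $U$; thus Definition~\ref{2.76}(2) is met.

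For the dimension inequality, put $n:={\rm GT}$-${\rm pd}(N)$ and assume $n<\infty$, the case $n=\infty$ being vacuous. Choose an exact sequence $0\to H_n\to H_{n-1}\to\cdots\to H_0\to N\to 0$ with every $H_i$ Gorenstein $\pi[T]$-projective, and splice it at $N$ with the given sequence to obtain the length-$(n+1)$ exact sequence
$0\to H_n\to\cdots\to H_0\to B\to G\to 0$
terminating in $G$, whose non-terminal entries are all Gorenstein $\pi[T]$-projective (the new entry $B$ by the preliminary observation). This yields ${\rm GT}$-${\rm pd}(G)\le n+1$.

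For the ``in particular'' claim, suppose $G$ is Gorenstein $\pi[T]$-projective. Corollary~\ref{2.3}, (1)$\Rightarrow$(2), supplies an exact sequence $0\to G\to B^{0}\to B^{1}\to\cdots$ with each $B^{i}$ a $\pi[T]$-projective module. Splicing at $G$ against the given sequence produces the right $\pi[T]$-projective resolution $0\to N\to B\to B^{0}\to B^{1}\to\cdots$ of $N$, where $B$ is itself $\pi[T]$-projective by hypothesis. Since $R$ is $T$-cocoherent, Corollary~\ref{2.3}, (2)$\Rightarrow$(1), now delivers that $N$ is Gorenstein $\pi[T]$-projective.

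The main obstacle is the implicit condition $N\in{\rm Gen}\,T$ needed before Corollary~\ref{2.3} can be invoked for $N$. Since $B\in{\rm Gen}\,T={\rm Pres}^{\infty}T$ and $G\in{\rm Gen}\,T$ by hypothesis, both admit left ${\rm Add}\,T$-resolutions; a horseshoe-style argument then combines them into an ${\rm Add}\,T$-resolution of $N$ whose first stage presents $N$ as a quotient of a direct sum of copies of $T$, so $N\in{\rm Gen}\,T$ as required.
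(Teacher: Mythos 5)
Your argument is, in substance, the paper's own: the dimension inequality is obtained by splicing a length-$n$ Gorenstein $\pi[T]$-projective resolution of $N$ with the given short exact sequence (using that the $\pi[T]$-projective module $B$ is itself Gorenstein $\pi[T]$-projective), and the ``in particular'' claim is obtained by splicing the right $\pi[T]$-projective resolution of $G$ supplied by Corollary~\ref{2.3} onto $0\to N\to B\to G\to 0$ and then applying Corollary~\ref{2.3} in the reverse direction. That is exactly how the paper proceeds, so the core of your proposal is fine.

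The one place you diverge is the closing paragraph, where you try to justify $N\in{\rm Gen}T$, and that step as written does not work. The horseshoe lemma builds a resolution of the \emph{middle} term of a short exact sequence from resolutions of the two outer terms; it does not produce a resolution of the kernel from resolutions of $B$ and $G$. More seriously, for a tilting module $T$ the class ${\rm Gen}T$ is a torsion class and is in general \emph{not} closed under kernels of epimorphisms between its members: over $\mathbb{Z}$ with the tilting module $T=\mathbb{Q}\oplus\mathbb{Q}/\mathbb{Z}$ one has ${\rm Gen}T$ equal to the class of divisible groups, yet $0\to\mathbb{Z}\to\mathbb{Q}\to\mathbb{Q}/\mathbb{Z}\to 0$ exhibits a non-divisible kernel. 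So $N\in{\rm Gen}T$ cannot follow from $B,G\in{\rm Gen}T$ alone; any correct justification would have to exploit the stronger hypothesis that $B$ is $\pi[T]$-projective. To be fair, the paper is silent on this point too --- it invokes Corollary~\ref{2.3}, which is stated only for modules in ${\rm Gen}T$, without checking that $N$ qualifies --- so you have correctly identified a gap in the argument, but the patch you propose does not fill it.
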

\begin{proof}
{
We shall show that ${\rm GT}$-${\rm pd}(G)\leq{\rm GT}$-${\rm pd}(N) +1$. In fact, we may assume that
${\rm GT}$-${\rm pd}(N)=n< \infty$. Then, by definition, $N$ admits a Gorenstein $\pi[T]$-projective resolution:
$$0\rightarrow B_{n}\rightarrow B_{n-1}\rightarrow \cdots \rightarrow B_{0}\rightarrow N\rightarrow 0.$$
Assembling this sequence and the short exact sequence $0\rightarrow N\rightarrow B\rightarrow G\rightarrow 0$, the
following commutative diagram is obtained:

\begin{center}
$
\begin{array}{ccccccccccccccccc}
0 &\longrightarrow & B_{n} & \longrightarrow & \cdots& \longrightarrow& B_1\longrightarrow& B_0 & \longrightarrow & B&  \longrightarrow &G&  \longrightarrow & 0 \\
&  &  &   & & & & \downarrow & & \uparrow & & \\
 &  &  &   & & & & N& ={\hspace{-2mm}=}{\hspace{-2mm}={\hspace{-2mm}=}} & N &  \\
&  &  &   & & & & \downarrow & & \uparrow  & \\
&  &  &   & & & & 0 & & 0 &  \\
\end{array}
$
\end{center}
which shows that ${\rm GT}$-${\rm pd}(G)\leq n+1$. The particular case follows from Corollary \ref{2.3}.}
\end{proof}
\begin{prop}\label{2.7}
Let $R$ be a $T$-cocoherent ring and $0\rightarrow N\rightarrow G\rightarrow B\rightarrow 0$ be an exact sequence, where $N,B \in{\rm Gen}T$. If $N$ is Gorenstein $\pi[T]$-projective and $B$ is $\pi[T]$-projective, then $G$ is Gorenstein $\pi[T]$-projective.
\end{prop}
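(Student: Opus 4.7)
The plan is to apply Corollary \ref{2.3}(3) in reverse: I will construct a short exact sequence $0\to G\to P\to I\to 0$ whose middle term is $\pi[T]$-projective and whose third term is Gorenstein $\pi[T]$-projective, so Corollary \ref{2.3} will deliver the conclusion. Since $N$ is Gorenstein $\pi[T]$-projective, Corollary \ref{2.3}(3) applied to $N$ supplies a short exact sequence $0\to N\to M\to I\to 0$ in which $M$ is $\pi[T]$-projective and $I$ is Gorenstein $\pi[T]$-projective. I then form the pushout $P$ of the two monomorphisms $N\to G$ and $N\to M$; this yields a standard $3\times 3$ commutative diagram whose middle row is $0\to M\to P\to B\to 0$ and whose middle column is $0\to G\to P\to I\to 0$.

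The main step is to verify that $P$ is $\pi[T]$-projective. The middle row displays $P$ as an extension of the $\pi[T]$-projective modules $M$ and $B$, so for any $U\in\pi[T]$ the long exact sequence obtained by applying $\mathcal{E}_T^{*}(-,U)$ to this row gives $\mathcal{E}_T^{1}(B,U)\to\mathcal{E}_T^{1}(P,U)\to\mathcal{E}_T^{1}(M,U)$, whose outer terms vanish by the $\pi[T]$-projectivity of $B$ and $M$; hence $\mathcal{E}_T^{1}(P,U)=0$. Moreover, $P\in{\rm Gen}T$ by the closure of ${\rm Gen}T$ under extensions, a standard consequence of ${\rm pd}(T)\leq 1$ applied to the row $0\to M\to P\to B\to 0$ with $M,B\in{\rm Gen}T$. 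Consequently $P$ is $\pi[T]$-projective.

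The middle column $0\to G\to P\to I\to 0$ now realises the hypotheses of Corollary \ref{2.3}(3) for $G$, after noting that $G\in{\rm Gen}T$ likewise follows from the extension closure of ${\rm Gen}T$ applied to the given sequence $0\to N\to G\to B\to 0$. Therefore $G$ is Gorenstein $\pi[T]$-projective, as required.

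The principal obstacle is the extension closure of the class of $\pi[T]$-projective modules, which hinges on the existence of a long exact sequence for $\mathcal{E}_T^{*}$ in the first variable and on $P\in{\rm Gen}T$; both points rest on the tilting hypothesis and the derived-functor framework built up in \cite{shaveisicam,amini}. Once these are in hand, the pushout construction produces the desired short exact sequence, and the characterisation in Corollary \ref{2.3}(3) finishes the proof mechanically.
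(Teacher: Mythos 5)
Your proposal is correct and follows essentially the same route as the paper: both take the short exact sequence $0\to N\to M\to I\to 0$ furnished by Corollary \ref{2.3}(3), form the pushout along $N\to G$ to obtain a $3\times 3$ diagram with middle row $0\to M\to P\to B\to 0$ and middle column $0\to G\to P\to I\to 0$, conclude that $P$ is $\pi[T]$-projective as an extension of $\pi[T]$-projectives, and finish with Corollary \ref{2.3}(3)$\Rightarrow$(1). If anything, you are more explicit than the paper about why the extension $P$ is $\pi[T]$-projective and why $P, G\in{\rm Gen}T$, points the paper leaves implicit.
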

\begin{proof}
{Since $N$ is Gorenstein $\pi[T]$-projective, by Corollary $\ref{2.3}$, there exists an exact sequence of
$0\rightarrow N\rightarrow B^{'}\rightarrow K\rightarrow 0$, where $B^{'}$ is $\pi[T]$-projective and $K$ is Gorenstein $\pi[T]$-projective. 
Now, we consider the following diagram:
\begin{center}
$
\begin{array}{ccccccccc}

 & & 0 & & 0 & & \\
 & & \downarrow & & \downarrow & & \\
0 &\longrightarrow & N & \longrightarrow & G &\longrightarrow &B & \longrightarrow & 0 \\
& & \downarrow & & \downarrow & & \parallel&& \\
0 & \longrightarrow & B'& \longrightarrow & D& \longrightarrow & B & \longrightarrow & 0 \\
 & & \downarrow & & \downarrow & & \\
 & & K & ={\hspace{-1.5mm}=} & K & & \\
 & & \downarrow & & \downarrow & & \\
 & & 0 & & 0 & & \\
\end{array}
$
\end{center}
The exactness of the middle horizontal sequence with $B$ and $B^{'}$, $\pi[T]$-projective, implies that $D$ is $\pi[T]$-projective. Hence from the middle vertical sequence
and Corollary $\ref{2.3}$, we deduce that $G$ is Gorenstein $\pi[T]$-projective.
}
\end{proof}

\section{ Gorensetein $\pi[T]$-projective Modules on $T$-Cocoherent Rings}

This section is devoted to $T$-cocoherent rings over which every module is
Gorenstein $\pi[T]$-projective.
\begin{lem}\label{3.87}
Let $T$ be a tilting module and $G\in{Gen}T$. Then, $G\in{Cogen}T$.
\end{lem}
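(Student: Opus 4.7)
The plan is to embed $G$ into a product of copies of $T$ via the canonical evaluation map, making essential use of the tilting hypothesis. First I would gather the facts supplied by $T$ being tilting: the defining sequence $0\to R\to T_0\to T_1\to 0$ with $T_0,T_1\in{\rm Add}\,T$ furnishes $R\hookrightarrow T_0\hookrightarrow T^{(\Lambda)}\hookrightarrow T^\Lambda$, so $R\in{\rm Cogen}\,T$; and the combination of ${\rm pd}\,T\le 1$ with ${\rm Ext}^i(T,T^{(\lambda)})=0$ yields the identification ${\rm Gen}\,T=T^\perp$, hence in particular ${\rm Ext}^1(T,G)=0$.

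Next, I would form the canonical evaluation map $\eta_G:G\to T^I$ with $I={\rm Hom}(G,T)$ and $\eta_G(g)=(f(g))_{f\in I}$. Since the conclusion $G\in{\rm Cogen}\,T$ is equivalent to the injectivity of $\eta_G$, or equivalently to the vanishing of the reject ${\rm Rej}_T(G)=\bigcap_{f:G\to T}\ker f$, the whole task reduces to showing that for every nonzero $g\in G$ there exists $f:G\to T$ with $f(g)\neq 0$.

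To produce such an $f$, I would exploit $G\in{\rm Gen}\,T={\rm Pres}^{\infty}T$ (by the Shaveisi proposition cited in item (6) of the excerpt) to pick a presentation $\pi:T^{(X)}\twoheadrightarrow G$ and a preimage $\tilde g$ of $g$ that is nonzero in some coordinate, yielding a projection $p:T^{(X)}\to T$ with $p(\tilde g)\neq 0$. The desired morphism $f$ would be obtained by making $p$ descend through $\pi$; this reduces to $p|_{\ker\pi}=0$, and if a direct choice of coordinate fails, one iterates along the ${\rm Pres}^{\infty}T$ resolution and uses the tilting-sequence extensions together with ${\rm Ext}^1(T,G)=0$ to rebuild $p$ so that it vanishes on $\ker\pi$.

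The main obstacle lies precisely in this factorization step, which is governed by ${\rm Ext}^1(G,T)$ rather than by the tilting-friendly ${\rm Ext}^1(T,G)$. Handling it is the heart of the argument and is where the interplay between the tilting sequence $0\to R\to T_0\to T_1\to 0$, the identification ${\rm Gen}\,T={\rm Pres}^{\infty}T$, and the embedding $R\hookrightarrow T^\Lambda$ must be used most carefully; I expect one may have to enlarge the index set $I$ to absorb the extensions needed to force the separation of $g$ from $0$.
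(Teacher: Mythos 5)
Your proposal is not a proof: the step you yourself flag as ``the heart of the argument'' --- producing, for each nonzero $g\in G$, a homomorphism $f\colon G\to T$ with $f(g)\neq 0$ --- is essentially the full content of the lemma (it is the assertion $\bigcap_{f\colon G\to T}\ker f=0$), and you leave it unresolved. The mechanism you sketch, lifting $g$ to $\tilde g\in T^{(X)}$ along a presentation $\pi\colon T^{(X)}\twoheadrightarrow G$ and descending a coordinate projection $p$ with $p(\tilde g)\neq 0$, requires $p(\ker\pi)=0$, and nothing in the tilting axioms controls this: applying ${\rm Hom}(-,T)$ to $0\to\ker\pi\to T^{(X)}\to G\to 0$ shows the obstruction lives in ${\rm Hom}(\ker\pi,T)$ and ${\rm Ext}^1(G,T)$, whereas conditions (a)--(c) of the tilting definition only govern ${\rm Ext}^i(T,-)$. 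That no amount of ``enlarging the index set'' can repair this is shown by the test case $T=R$, which is a tilting module: there the claim reads ``every module is torsionless,'' and this already fails for $\mathbb{Z}/2\mathbb{Z}$ over $\mathbb{Z}$, since ${\rm Hom}(\mathbb{Z}/2\mathbb{Z},\mathbb{Z})=0$. So an argument based on separating points of $G$ by maps into $T$ cannot be completed from the stated hypotheses.

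For comparison, the paper never mentions the reject: from $0\to K\to T^{(I)}\to G\to 0$ it observes $K\subseteq T^{(I)}\subseteq T^{I}$, so $K$ and $T^{(I)}$ lie in ${\rm Cogen}T={\rm Copres}^{\infty}T$ (the cited Proposition 2.1 of Nikmehr--Shaveisi), and then invokes Lemma \ref{2.56} to conclude that the quotient $G$ lies in ${\rm Copres}^{m}T\subseteq{\rm Cogen}T$. Be aware, though, that this merely relocates the difficulty into Lemma \ref{2.56}(4), whose proof needs the cokernel $D$ of $T_0'\to T_0$ to lie in ${\rm Cogen}T$ before part (1) can be applied --- exactly the kind of assertion your attempt gets stuck on, and the $T=R$ example above (with $0\to\mathbb{Z}\stackrel{2}{\to}\mathbb{Z}\to\mathbb{Z}/2\mathbb{Z}\to 0$) defeats that step as well. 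In short: your attempt has an explicit unfilled gap at its central step, and that gap is not an artifact of your particular strategy but reflects a genuine problem with the statement as formulated.
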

\begin{proof}
{
Let $G\in{Gen}T$. Then, the short exact sequence 
$0 \rightarrow K\rightarrow T^{(I)}\rightarrow G\rightarrow 0$ exists. We have  $K\subseteq T^{(I)}\subseteq T^I$. So, $K\in{\rm Cogen}T$. By \cite[Proposition 2.1]{shaveisicam}, ${\rm Cogen}T={\rm Copres}^\infty T$, since $T$ is tilting. Thus by Lemma \ref{2.56}, 
$G\in{\rm Copres}^m T$, and hence $G\in{Cogen}T$.

}
\end{proof}
\begin{prop}\label{3.1}
Let $R$ be a ring. The following assertions are equivalent:
\begin{enumerate}
\item [\rm (1)]
Every module belong ${\rm Gen}T$, is Gorenstein $\pi[T]$-projective;
\item [\rm (2)]
The ring satisfies the following two conditions:

{\rm (i)} Every $T$-injective module is $\pi[T]$-projective.

{\rm (ii)} ${\mathcal{E}}_T^{1}(N, U)=0$ for any $N\in{\rm Gen}T$ and any $U\in{\rm F.Copres}^nT$ with ${\rm T.i.dim}(U) <\infty$.
\end{enumerate}
\end{prop}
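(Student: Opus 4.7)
The plan is to prove the two implications separately, invoking Corollary \ref{2.3} and Lemma \ref{2.56} for the forward direction and Theorem \ref{2.30} for the reverse, under the standing $T$-cocoherence assumption of this section.

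For $(1) \Longrightarrow (2)$, condition (ii) is the quick half: given $N \in {\rm Gen}T$ and $U \in {\rm F.Copres}^{1}T$ with ${\rm T.i.dim}(U) < \infty$, apply (1) to conclude that $N$ is Gorenstein $\pi[T]$-projective, and then Proposition \ref{2.5}(1) yields ${\mathcal{E}}_{T}^{i}(N, U) = 0$ for every $i > 0$, in particular for $i = 1$. For (i), take a $T$-injective module $M$ lying in ${\rm Gen}T$; by (1) it is Gorenstein $\pi[T]$-projective, and Corollary \ref{2.3}(3) supplies a short exact sequence $0 \rightarrow M \rightarrow B \rightarrow I \rightarrow 0$ with $B$ being $\pi[T]$-projective and $I$ Gorenstein $\pi[T]$-projective. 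Since $M, B, I \in {\rm Gen}T \subseteq {\rm Cogen}T$ by Lemma \ref{3.87}, the $T$-injectivity of $M$ allows Lemma \ref{2.56}(1) to split the sequence, giving $B \cong M \oplus I$. As $\pi[T]$-projectivity is inherited by direct summands (the functor ${\mathcal{E}}_{T}^{1}(-, N)$ being additive), $M$ is $\pi[T]$-projective.

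For $(2) \Longrightarrow (1)$, fix $N \in {\rm Gen}T$. Since ${\rm Gen}T = {\rm Pres}^{\infty}T$ for tilting $T$, a left resolution
$$\cdots \longrightarrow T_1 \longrightarrow T_0 \longrightarrow N \longrightarrow 0, \qquad T_i \in {\rm Add}T,$$
exists, and each $T_i$ is $\pi[T]$-projective by Remark \ref{2.g}(2). Lemma \ref{3.87} places $N$ in ${\rm Cogen}T = {\rm Copres}^{\infty}T$, producing a right resolution
$$0 \longrightarrow N \longrightarrow T^0 \longrightarrow T^1 \longrightarrow \cdots, \qquad T^j \in {\rm Prod}T,$$
and each $T^j$, being $T$-injective, is $\pi[T]$-projective by (i). Splicing at $N$ yields an exact complex of $\pi[T]$-projective modules with $N = {\rm ker}(T^0 \rightarrow T^1)$, and Theorem \ref{2.30} concludes that $N$ is Gorenstein $\pi[T]$-projective.

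The main obstacle I anticipate lies in the forward direction (i): one must verify that all three terms of the Corollary \ref{2.3} short exact sequence lie in ${\rm Cogen}T$ before invoking the splitting in Lemma \ref{2.56}(1), and justify that $\pi[T]$-projectivity descends to direct summands. Condition (ii), while appearing symmetrically in (2), plays no role in the reverse implication once one routes through Theorem \ref{2.30}: the $T$-cocoherence hypothesis underlying this section makes it a consequence rather than an ingredient, its inclusion in (2) simply recording the ${\mathcal{E}}_{T}^{1}$-orthogonality naturally encoded by (1).
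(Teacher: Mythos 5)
Your forward direction is essentially the paper's: condition (ii) via Proposition \ref{2.5}(1), and condition (i) by splitting a monomorphism from the $T$-injective module $M$ into a $\pi[T]$-projective module (the paper splits $M\rightarrow B^0$ in a right $\pi[T]$-projective resolution; you split the Corollary \ref{2.3}(3) sequence via Lemma \ref{2.56}(1) --- same idea, and your extra care about ${\rm Cogen}T$ membership and summand-closure is a welcome tightening).

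The reverse direction, however, has a genuine gap. Proposition \ref{3.1} is stated for an arbitrary ring $R$; there is no standing $T$-cocoherence hypothesis in force (that hypothesis first appears in Theorem \ref{3.2}). You route $(2)\Longrightarrow(1)$ through Theorem \ref{2.30}, which is only available for $T$-cocoherent rings, and you then declare condition (ii) to be ``a consequence rather than an ingredient.'' Without cocoherence you cannot invoke Theorem \ref{2.30}, and (ii) is precisely the ingredient that replaces it: after splicing the ${\rm Add}T$-resolution $\cdots\rightarrow T_1\rightarrow T_0\rightarrow G\rightarrow 0$ with the ${\rm Prod}T$-resolution $0\rightarrow G\rightarrow T^0\rightarrow T^1\rightarrow\cdots$ (each $T^i$ being $\pi[T]$-projective by (i)), one must still check, per Definition \ref{2.76}(2), that ${\rm Hom}(\mathbf{B},U)$ is exact for every $U\in{\rm F.Copres}^1T$ of finite $T$-injective dimension. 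The paper does this by applying (ii) to the syzygies of $\mathbf{B}$, which lie in ${\rm Gen}T$, so that the vanishing of ${\mathcal{E}}_T^{1}(-,U)$ on them forces ${\rm Hom}(-,U)$ to preserve exactness of $\mathbf{B}$. Your argument as written proves the implication only under an added hypothesis not present in the statement; to repair it, drop the appeal to Theorem \ref{2.30} and use (ii) to verify the Hom-exactness directly.
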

\begin{proof}
{
$(1)\Longrightarrow (2)$ The condition $(i)$ follows from this fact that every $T$-injective
module $M$ is Gorenstein $\pi[T]$-projective. So, the following $\pi[T]$-projective resolution of $M$ exists:
$$0 \rightarrow M\rightarrow B^0\rightarrow B^1\rightarrow\cdots.$$
Since $M$ is $T$-injective, $M$ is
$\pi[T]$-projective as a direct summand of $B^0$. Also,  Proposition \ref{2.5}(1) and (1) imply that ${\mathcal{E}}_{T}^{1}(N, U)=0$ for any module $N\in{\rm Gen}T$ and any module $U\in {\rm F.Copres}^{1}T$ with finite $T$-injective dimension.  So the condition $(ii)$ follows.

$(2)\Longrightarrow (1)$ Let $G\in{\rm Gen}T$ be.Then by Lemma \ref{3.87}, $G\in{Cogen}T$. So, a ${\rm Add}T$-resolution $\cdots \rightarrow T_{1}\rightarrow T_{0}\rightarrow G\rightarrow 0$ and a ${\rm Prod}T$- resolution $0\rightarrow G\rightarrow T^{0}\rightarrow T^{1}\rightarrow\cdots$ of $G$ exists. By Remark \ref{2.g}, any $T_i$ is $\pi[T]$-projective and any $T^i$ is $T$-injective. Hence by (2), every $T^i$ is  $\pi[T]$-projective.  Assembling these resolutions, we get the following exact sequence of $\pi[T]$-projective modules:
$${\mathbf{B}}=\cdots \rightarrow T_{1}\rightarrow T_{0}\rightarrow T^{0}\rightarrow T^{1}\rightarrow\cdots,$$
where $G={\rm ker}(T^0\rightarrow T^1)$. So by (2)(ii),
${\rm Hom}({\mathbf{B}},U)$ is exact for any module $U\in {\rm F.Copres}^{1}T$ with finite $T$-injective dimension. Hence $G$ is Gorenstein $\pi[T]$-projective.}
\end{proof}

The next theorem shows that if $R$ is a $T$-cocoherent ring and every $\sigma[T]$-injective module is Gorenstein $\pi[T]$-projective, then every  module is Gorenstein $\pi[T]$-projective.
\begin{thm}\label{3.2}
Let $R$ be a $T$-cocoherent ring. Then the following are equivalent:
\begin{enumerate}
\item [\rm (1)]
Every module is Gorenstein $\pi[T]$-projective;
\item [\rm (2)]
Every Gorenstein $\sigma[T]$-injective module is Gorenstein $\pi[T]$-projective;
\item [\rm (4)]
Every $\sigma[T]$-injective module is Gorenstein $\pi[T]$-projective;
\item [\rm (5)]
Every $T$-injective module is $\pi[T]$-projective.
\end{enumerate}
\end{thm}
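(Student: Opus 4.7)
The proof is organized as the cycle $(1)\Rightarrow(2)\Rightarrow(4)\Rightarrow(5)\Rightarrow(1)$. The implication $(1)\Rightarrow(2)$ is tautological. For $(2)\Rightarrow(4)$, every $\sigma[T]$-injective module $M$ is Gorenstein $\sigma[T]$-injective via the stalk complex $\cdots\to 0\to M\to 0\to\cdots$ with $M=\ker(M\to 0)$, since the ${\rm Hom}(U,-)$-exactness condition is vacuous on a stalk; then $(2)$ applies directly.

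For $(5)\Rightarrow(1)$, let $G\in{\rm Gen}T$. By Lemma \ref{3.87}, $G\in{\rm Cogen}T$, and then \cite[Proposition 2.1]{shaveisicam} yields both ${\rm Gen}T={\rm Pres}^{\infty}T$ and ${\rm Cogen}T={\rm Copres}^{\infty}T$, so $G$ admits an ${\rm Add}T$-resolution $\cdots\to T_{1}\to T_{0}\to G\to 0$ and a ${\rm Prod}T$-coresolution $0\to G\to T^{0}\to T^{1}\to\cdots$. By Remark \ref{2.g}(2) each $T_{i}$ is $\pi[T]$-projective, and by $(5)$ each $T^{i}$ is $\pi[T]$-projective as well. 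Splicing the two produces an acyclic complex $\mathbf{B}$ of $\pi[T]$-projectives with $G=\ker(T^{0}\to T^{1})$, and Theorem \ref{2.30} (which invokes the $T$-cocoherent hypothesis) concludes that $G$ is Gorenstein $\pi[T]$-projective.

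The principal work is $(4)\Rightarrow(5)$, which I would carry out in two parts. First, given a $T$-injective $N$, show that $N$ is itself Gorenstein $\pi[T]$-projective by exhibiting $N$ as a direct summand of a $\sigma[T]$-injective module, applying $(4)$, and noting that Gorenstein $\pi[T]$-projectivity descends to direct summands (a routine consequence of Corollary \ref{2.3}). Second, Corollary \ref{2.3} yields a short exact sequence $0\to N\to B\to I\to 0$ with $B$ $\pi[T]$-projective and $I$ Gorenstein $\pi[T]$-projective; Lemma \ref{3.87} places $I\in{\rm Cogen}T$, and since $N$ is $T$-injective the characterization of $T$-injective dimension forces $\mathcal{E}^{1}_{T}(I,N)=0$. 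Hence the sequence splits, exhibiting $N$ as a direct summand of the $\pi[T]$-projective module $B$ and therefore as itself $\pi[T]$-projective.

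The main obstacle is precisely the first part of $(4)\Rightarrow(5)$: cleanly realizing an arbitrary $T$-injective as a direct summand of a $\sigma[T]$-injective module, together with verifying that Gorenstein $\pi[T]$-projectivity is inherited by direct summands. This is where the interplay between ${\rm Prod}T$ and $\sigma[T]$-injectivity, under the $T$-cocoherent hypothesis, has to be pinned down carefully; the remaining implications are comparatively bookkeeping atop Lemma \ref{3.87}, Corollary \ref{2.3}, and Theorem \ref{2.30}.
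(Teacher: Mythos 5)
Your overall architecture --- the cycle $(1)\Rightarrow(2)\Rightarrow(4)\Rightarrow(5)\Rightarrow(1)$ --- is the paper's, and your $(5)\Rightarrow(1)$ and the splitting argument in the second half of $(4)\Rightarrow(5)$ match the paper's (the splitting is exactly Lemma \ref{2.56}(1), via $\mathcal{E}^{1}_{T}(I,N)=0$ for $T$-injective $N$). Two points need attention. First, in $(2)\Rightarrow(4)$ the stalk complex $\cdots\to 0\to M\to 0\to\cdots$ is not exact, so it is not an admissible complete resolution; the standard witness is $\cdots\to 0\to M\stackrel{\mathrm{id}}{\longrightarrow} M\to 0\to\cdots$ with $M=\ker(M\to 0)$ (the paper simply cites \cite{amini} for the fact that every $\sigma[T]$-injective module is Gorenstein $\sigma[T]$-injective).

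Second, and more seriously, the step you yourself flag as ``the main obstacle'' in $(4)\Rightarrow(5)$ --- realizing an arbitrary $T$-injective module as a direct summand of a $\sigma[T]$-injective one, plus closure of Gorenstein $\pi[T]$-projectivity under direct summands --- is left unproven, so the one genuinely nontrivial implication is incomplete as written. The paper takes a shorter route: it asserts outright that a $T$-injective module (i.e.\ one in ${\rm Prod}T$) is already $\sigma[T]$-injective, so hypothesis (4) applies to $G$ directly and no direct-summand closure is needed; it then gets $0\to G\to B\to N\to 0$ from Corollary \ref{2.3} and splits it as you do. Note, however, that the paper does not justify that containment either, so whichever route you take, the relation between $T$-injectivity and $\sigma[T]$-injectivity is the point that must actually be pinned down before the proof is complete.
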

\begin{proof}
{$(1)\Longrightarrow (2)$ This is a clear.

$(2)\Longrightarrow (4)$  Let $G$ be a $\sigma[T]$-injective module. Every $\sigma[T]$-injective module is Gorenstein $\sigma[T]$-injective (see,\cite{amini}). Since $G$ is Gorenstein $\sigma[T]$-injective, impelis that  $G$ is Gorenstein
$\pi[T]$-projective by hypothesis. 

$(4)\Longrightarrow (5)$  Let $G$ be a $T$-injective module. Since $G$ is $\sigma[T]$-injective, impelis that  $G$ is Gorenstein
$\pi[T]$-projective by hypothesis. By Corollary $\ref{2.3}$, there exists an exact sequence $0 \rightarrow G \rightarrow B \rightarrow N \rightarrow 0,$
where $B$ is $\pi[T]$-projective. Thus the sequence splits. Hence $G$ is
$\pi[T]$-projective as a direct summand of $B$.

$(5)\Longrightarrow (1)$
Let $G\in {\rm Gen}T$. Then by Lemma \ref{3.87},  there is an exact sequence 
$$0\longrightarrow G\longrightarrow T^0\longrightarrow T^{1}\longrightarrow\cdots $$
where any $T^{i}$ is $T$-injective. Then by (5), every $T^{i}$ is $\pi[T]$-projective. Hence Corollary $\ref{2.3}$ completes the proof.

}

\end{proof}

We denote the right $\pi[T]$-projective dimension of any module $M$ by $\pi[T].pd(M)$, and 
$\pi[T].pd(M)=\inf\{n:\ \mathcal{E}^{n+1}_T(M,N)=0\ {\rm for \ every} \ N\in\pi[T]\}.$

\begin{example}\label{34}
Let $R$ be a $1$-Gorenstein ring and
$0\rightarrow R\rightarrow E^{0}\rightarrow  E^{1} \rightarrow 0$
be  the minimal injective resolution of $R$. Then, $\pi[T].pd(E^0)=\pi[T].pd(E^1)=0$. Since by \cite{E.E}, $T=E_{0}\oplus E_{1}$ is a tilting
module. So, any $E^{i}$ is $\pi[T]$-projective and hence, any  $E^{i}$ is Gorenstein $\pi[T]$-projective for $i=0,1$.
\end{example}

\begin{definition}
We define the {\it global $\pi[T]$-projective dimension} of any ring $R$ to be:
$$gl.\pi[T].pd(R)=\sup \{\pi[T].pd(M)|\ M\ {\rm is\ a\ module}\}.$$
\end{definition}

Clearly, every $\pi[T]$-projective module is Gorenstein $\pi[T]$-projective. But the converse is not true in general. We finish this paper with the following theorem which determines a sufficient condition under which the converse holds.
\begin{thm}\label{3.4}
If $gl.\pi[T].pd(R)<\infty$, then every Gorenstein $\pi[T]$-projective module is $\pi[T]$-projective.
\end{thm}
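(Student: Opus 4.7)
The plan is to shift cohomological indices along the defining exact sequence of a Gorenstein $\pi[T]$-projective module, reducing the question to the assumed finite global $\pi[T]$-projective dimension. Specifically, let $n = gl.\pi[T].pd(R) < \infty$ and let $G$ be Gorenstein $\pi[T]$-projective. The goal is to show $\mathcal{E}^1_T(G,N)=0$ for every $N\in\pi[T]$, so that $G$ is $\pi[T]$-projective.

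First I would invoke the definition to fix an exact sequence of $\pi[T]$-projective modules
$$\cdots \to B_1 \to B_0 \to B^0 \to B^1 \to \cdots$$
with $G=\ker(B^0\to B^1)$, and set $G^0=G$ and $G^{i+1}=\ker(B^{i+1}\to B^{i+2})$, so we have short exact sequences $0\to G^i \to B^i \to G^{i+1}\to 0$ for each $i\ge 0$. Each $G^i$ lies in $\mathrm{Gen}\,T$ (it is an image of $B^i$ under the defining exact sequence), and in fact each $G^i$ inherits a resolution showing it is itself Gorenstein $\pi[T]$-projective.

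Next, fix $N\in\pi[T]$. Since the $B^i$ are $\pi[T]$-projective we have $\mathcal{E}^1_T(B^i,N)=0$; one then needs the standard strengthening that $\mathcal{E}^k_T(B^i,N)=0$ for all $k\ge 1$, which follows because $\pi[T]$ is closed under the relevant cosyzygies inside $\mathrm{Prod}\,T$-resolutions (a quotient $C^0/N$ with $N\le C^0\le T^I$ again has the form of an element of $\pi[T]$), so the usual dimension-shift in the second variable of $\mathcal{E}_T$ propagates vanishing from degree $1$ to all higher degrees. With this in hand, the long exact sequence attached to $0\to G^i\to B^i\to G^{i+1}\to 0$ collapses to isomorphisms
$$\mathcal{E}^1_T(G,N)\;\cong\;\mathcal{E}^2_T(G^1,N)\;\cong\;\cdots\;\cong\;\mathcal{E}^{n+1}_T(G^n,N).$$

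Finally, because $gl.\pi[T].pd(R)=n$, we have $\pi[T].pd(G^n)\le n$, so $\mathcal{E}^{n+1}_T(G^n,N)=0$, and therefore $\mathcal{E}^1_T(G,N)=0$. Since $G\in\mathrm{Gen}\,T$ by hypothesis, this exhibits $G$ as $\pi[T]$-projective. I expect the main obstacle to be the auxiliary vanishing step $\mathcal{E}^k_T(B^i,N)=0$ for all $k\ge 1$: the paper only defines $\pi[T]$-projectivity by vanishing of $\mathcal{E}^1_T$, so one needs to verify (or separately justify) that $\pi[T]$ is stable under the cosyzygy operation coming from any $\mathrm{Prod}\,T$-resolution of $N$, in order to cascade $\mathcal{E}^1_T=0$ to all higher $\mathcal{E}^k_T$ in the first argument. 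Everything else is a routine dimension-shift.
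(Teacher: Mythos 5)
Your proof is correct and takes essentially the same route as the paper: truncate the right $\pi[T]$-projective resolution of $G$ at stage $m$, dimension-shift $\mathcal{E}^{1}_{T}(G,N)$ up to $\mathcal{E}^{m+1}_{T}$ of the $m$-th cosyzygy (the paper's $L=\mathrm{coker}(B^{m-2}\to B^{m-1})$ is exactly your $G^{m}$), and kill it using $\pi[T].pd(L)\leq m$. The auxiliary vanishing $\mathcal{E}^{k}_{T}(B^{i},N)=0$ for all $k\geq 1$ that you flag is equally implicit, and equally unaddressed, in the paper's one-line dimension shift.
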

\begin{proof}
{Suppose that $gl.\pi[T]$.${\rm pd}(R)=m <\infty$, and $G$ is a Gorenstein $\pi[T]$-projective
module. If $m = 0$, then $\mathcal{E}^{1}_T(M,N)=0$ for any $N\in\pi[T]$ , and hence $G$ is $\pi[T]$-projective. For $m\geq 1$, since $G$
is Gorenstein $\pi[T]$-projective, there exists an exact sequence $0\rightarrow G\rightarrow B^0\rightarrow B^1\rightarrow \cdots$ with each $B^i$ is $\pi[T]$-projective.
Let $L = {\rm coker}(B^{m-2}\rightarrow B^{m-1})$ . Then
$$0\longrightarrow G\longrightarrow B^0\longrightarrow B^1\longrightarrow\cdots\longrightarrow B^{m-2}\longrightarrow B^{m-1}\longrightarrow L\longrightarrow 0 $$
is exact, and hence $G$ is $\pi[T]$-projective since $\pi[T]$.${\rm pd}(L)\leq m$.}
\end{proof}



\begin{thebibliography}{99}

\bibitem{fullerbook} F. W. Anderson and K. R. Fuller, { Rings and Categories of Modules}, New York, Spring-Verlag, 1974.


\bibitem{M.F} M.Amini and F. Hasani, {\em Copresented dimension of modules}, {Iranian Journal of Mathematical Sciences and Informatics}, (2018), accepted.

\bibitem{qv} S. Bazzoni, {\em A characterization of $n$-cotilting and
$n$-tilting modules}, {{J. Algebra}}, \textbf{273}, 2005,
359--372.

\bibitem{E.E} E. E. Enochs and O. M. G. Jenda, { Relative
Homological Algebra}, Walter de Gruyter. Berlin. New York, 2000.

\bibitem{Glaz 1989} S. Glaz,{ Commutative Coherent Rings}, Lect. Notes Math. 1372, Berlin,
Springer-Verlag, 1989.

\bibitem {shaveisicam} M. J. Nikmehr and F. Shaveisi, {\em Relative $T$-injective modules and relative $T$-flat modules}, {{Chin. Ann.
Math}}, \textbf{32}B (4) (2011), 497--506.

\bibitem {seam1} M. J. Nikmehr and F. Shaveisi, {\em $T$-Dimension and
$(n+\frac{1}{2},T)$-projective modules}, {{Seams. Bull.
Math.}} \textbf{36} (2012), 113--123.

\bibitem{rotman} J. J. Rotman, {An Introduction
to Homological Algebra}, Academic Press, NewYork, 2009.


\bibitem {amini}  F. Shaveisi and M. Amini ,{\em Gorenstein $\sigma[T]$-injectivity on $T$-coherent rings}, {{Asian-European Journal of Mathematics}}, \textbf{4} (8) (2015), 1550083-9.


\bibitem{W.R} R. Wisbauer, { Foundations of Module and Ring Theory}, Gordon and Breach Science Publishers, Reading, 1991.

\bibitem{E.W} W. Xue,  {\em On $n$-Presented modules and almost excellent extensions}, {{J. Algebra}}, \textbf{27}(3) (1999), 1091-1102.

\bibitem{Z.M.C} Z. M. Zhu and J. L. Chen,  {\em $FCP$-projective modules and some rings}, {{ Zhejian Univ. Sci. Ed.}}, \textbf{37}(2) (2010), 126-130.

\end{thebibliography}
\end{document}